\title[AT-associator]
{On the coefficients of \\
the Alekseev Torossian associator}
\author{Hidekazu Furusho}
\address{Graduate School of Mathematics, Nagoya University, 
Chikusa-ku, Furo-cho, Nagoya, 464-8602,  Japan}
\email{furusho@math.nagoya-u.ac.jp}
\date{February 27, 2018.}
\newtheorem{thm}{Theorem}[section]
\newtheorem{lem}[thm]{Lemma}
\theoremstyle{remark}
\theoremstyle{definition}
\newtheorem{defn}[thm]{Definition}
\newtheorem{eg}[thm]{Example}       
\numberwithin{equation}{section}
\numberwithin{figure}{section}
\newcommand{\Conf}{\mathrm{Conf}}
\newcommand{\Aff}{\mathrm{Aff}}
\newcommand{\AT}{\mathrm{AT}}
\newcommand{\KZ}{\mathrm{KZ}}
\newcommand{\RC}{\mathrm{RC}}
\newcommand{\PA}{\mathrm{PA}}
\newcommand{\id}{\mathrm{id}}
\newcommand{\ad}{\mathrm{ad}}
\newcommand{\Aut}{\mathrm{Aut}}
\newcommand{\TAut}{\mathrm{TAut}}
\newcommand{\NTAut}{\mathrm{NTAut}}
\newcommand{\Int}{\mathrm{Int}}
\newcommand{\der}{\mathrm{der}}
\newcommand{\tder}{\mathrm{tder}}
\newcommand{\ntder}{\mathrm{ntder}}
\newcommand{\LieGra}{\mathrm{LieGra}}
\newcommand{\CBH}{\mathrm{CBH}}
\newcommand{\duf}{\mathrm{duf}}
\newcommand{\C}{\mathbb{C}}
\newcommand{\R}{\mathbb{R}}
\newcommand{\Z}{\mathbb{Z}}
\newcommand*\circled[1]{\tikz[baseline=(char.base)]{
            \node[shape=circle,draw,inner sep=2pt] (char) {#1};}}
\begin{document}
\bibliographystyle{amsalpha+}
\maketitle

\begin{abstract}
This paper explains a method to calculate 
the coefficients of the Alekseev-Torossian associator
as linear combinations of iterated integrals of Kontsevich weight forms
of Lie graphs.
\end{abstract}

\tableofcontents
\setcounter{section}{-1}
\section{Introduction}\label{introduction}
Associators are group-like non-commutative formal power series with two variables
which were subject to 
the pentagon equation and the hexagon equations in \cite{Dr}
(actually it was shown in \cite{F10} that the former implies the latter).
The notion is involved with wide area of mathematics,
the quantization of Lie-bialgebras (cf. \cite{EK}),
the combinatorial reconstruction of the universal Vassiliev knot invariant 
(cf. \cite{BN,C, KT, LM96a,P}),
the proof of formality of chain operad of  little discs (cf. \cite{Ta, SW}),
the solution of Kashiwara-Vergne conjecture (cf. \cite{AT12}), etc.

A typical example of associators is the {\it KZ-associator} $\varPhi_{\KZ}$ in the algebra $\C\langle\langle A,B\rangle\rangle$
of power series over $\C$ with variables $A$ and $B$, 
which was constructed by two fundamental solutions of the KZ (Knizhnik-Zamolodchikov) equation in \cite{Dr}.
In  \cite{LM} Theorem A.8 and \cite{F} Proposition 3.2.3, it was given a method to calculate
its coefficients 
as linear combinations of
{\it multiple zeta values},
the real numbers defined by the following power series
\begin{equation*}\label{MZV}
\zeta(k_1,\dots,k_m)
:=\sum_{0<n_1<\cdots<n_m}\frac{1}
{n_1^{k_1}\cdots n_m^{k_m}}
\end{equation*}
with $k_1,\dots, k_{m}\in {\mathbb N}$ 
and $k_m>1$ (the condition to be convergent).

The AT-associator $\varPhi_{\AT}$
is another example of associators.
It was introduced by Alekseev and Torossian \cite{AT10}
as an \lq associator in $\TAut_3$'
and later shown to be an associator in $\R\langle\langle A,B\rangle\rangle$
by \v{S}evera and Willwacher \cite{SW}.
It was constructed by a parallel transport of the AT-equation
(cf. \S\ref{sec:Explicit formula of the AT-associator})
on Kontsevich's eye $\overline{C}_{2,0}$
(cf. \S\ref{sec:Kontsevich's eye}).
This paper discusses a AT-counterpart of the results of \cite{LM} and \cite{F}.
We give a method in Theorem \ref{main theorem}
to describe coefficients of 
the AT-associator $\varPhi_{\AT}$
in terms of linear combinations of iterated integrals of Kontsevich weight forms
of Lie graphs (cf. \S \ref{sec:Kontsevich weight forms of Lie graphs})
on $\overline{C}_{2,0}$
and execute computations in lower depth in \S\ref{sec:examples}.

We note that similar (or possibly related) arguments are observed in \cite{Alm} Theorem 8.0.4.5
where Alm described
Lyndon word expansion  of the 1-form $\omega_{\AT}$,
while in this paper we calculate free word expansion of
its parallel transport $\varPhi_{\AT}$.

\section{Kontsevich's eye}\label{sec:Kontsevich's eye}

We will recall the compactified configuration spaces \cite{K03}.
Let $n\geqslant 1$. For a topological space $X$, we define 
$\Conf_n(X):=\{(x_1,\dots,x_n)\bigm| x_i\neq x_j \ (i\neq j)\}$.
The group 
$\Aff_+:=\{x\mapsto ax+b \bigm| a\in\R_+^\times,b\in\C\}$ acts on $\Conf_n(\C)$
diagonally by rescallings and  parallel translations. 
We denote the quotient by 
$$C_n:=\Conf_n(\C)/ \Aff_+$$
for $n\geqslant 2$, which is a connected oriented smooth manifold with dimension $2n-3$.
E.g. $C_2\simeq S^1$ and $C_3\simeq S^1\times (\mathrm{P}^1(\C)\setminus\{0,1,\infty\})$.

Put $\Conf_{n,m}(\mathbb{H},\R):=\Conf_n(\mathbb{H})\times \Conf_m(\R)$
with the coordinate $(z_1,\dots,z_n,x_1,\dots,x_m)$,
where $\mathbb{H}$ is the upper half plane.
The group 
$\Aff^\R_+:=\{x\mapsto ax+b \bigm| a\in\R_+^\times,b\in\R\}$ acts there
diagonally
and we denote the quotient by
$$C_{n,m}:=\Conf_{n,m}(\mathbb{H},\R)/ \Aff^\R_+$$
for $n,m\geqslant 0$ with $2n+m\geqslant 2$,
It is an oriented smooth manifold with dimension $2n+m-2$ and
with $m!$ connected components.
Particularly we denote $C_{n,m}^+$ to be its connected component whose real coordinates
are on the position $x_1<x_2<\cdots<x_m$.
E.g. $C_{0,2}\simeq \{\pm 1\}$,
$C_{1,1}\simeq\{e^{\sqrt{-1}\pi\theta}\bigm| 0<\theta < 1\}$ and 
$C_{2,0}\simeq \mathbb{H}-\{\sqrt{-1}\}$.

Kontsevich constructed 
compactifications $\overline{C}_{n}$ and $\overline{C}_{n,m}^+$ of
$C_n$ and $C_{n,m}^+$ \`{a} la Fulton-MacPherson in \cite{K03}.
They are endowed with the structures of manifolds with corners.
They are functorial  with respect to the inclusions of two finite sets, i.e.
$I_1\subset I_2$ and $J_1\subset J_2$  with 
$\sharp(I_k)=n_k$ and $\sharp(J_k)=m_k$ ($k=1,2$) yield a
natural map $\overline{C}_{n_2,m_2}^+\to\overline{C}_{n_1,m_1}^+$.
The stratification of his compactification has a very nice description in terms of trees in \cite{K03} (also refer \cite{CKTB}).
Particularly $\overline{C}_{2,0}$, which is called {\it Kontsevich's eye},
is given by
$\overline{C}_{2,0}=C_{2,0}\sqcup C_{1,1}\sqcup C_{1,1}\sqcup C_{2}\sqcup C_{0,2}$.
Each component bears a special name
as is indicated in Figure \ref{Eye}.
\begin{figure}[h]
\begin{center}
\begin{tikzpicture}
\draw [very thick] (-3,0).. controls  (-1,2) and (1,2)..(3,0); 
\draw [very thick] (-3,0).. controls  (-1,-2) and (1,-2)..(3,0); 
\draw[dotted] (1.5,1)--(2.3,2.6) node[above]{upper eyelid $C_{1,1}$};
\draw[dotted]  (0.5,-1.5)--(2.5,-2.5) node[below,right]{lower eyelid $C_{1,1}$};
\shadedraw (-3,0).. controls  (-1,2) and (1,2)..(3,0)
(-3,0).. controls  (-1,-2) and (1,-2)..(3,0);
\draw[dotted] (0,0)--(3,-1.5) node[above,right]{iris $C_2$};
\fill[white] (0,0) circle (25pt);
\draw[very thick] (0,0) circle (25pt);
\draw[dotted] (3,0)--(3.75,1.5) node[above,right]{RC (right corner) $C_{0,2}^+$};
\draw[dotted] (-3,0)--(-2,2) node[above]{LC (left corner) $C_{0,2}^+$};
\draw[dotted] (-1.35,-0.35)--(-2.25,-2) node[below]{$C_{2,0}$};
\end{tikzpicture}
\caption{Kontsevich's eye $\overline{C}_{2,0}$}
\label{Eye}
\end{center}
\end{figure}
The {\it upper} (resp. {\it lower}) {\it eyelid} corresponds to 
$z_1$  (resp. $z_2$)  on the the real line.
The {\it iris} magnifies 
collisions of $z_1$ and $z_2$ on $\mathbb{H}$.
LC (resp. RC) which stands for the {\it left} (resp. {\it right}) {\it corner}
is the configuration of $z_1>z_2$  (resp. $z_1<z_2$)
on the real line.

The {\it angle map} $\phi: \overline{C}_{2,0}\to \R/\Z$
is the map induced from the map $\Conf_2(\mathbb{H})\to \R/\Z$
sending 
\begin{equation}\label{eqn:qngle mqp}
\phi:(z_1,z_2)\mapsto \frac{1}{2\pi}\arg\left(\frac{z_2-z_1}{z_2-\bar z_1}\right).
\end{equation}
We note that $\phi$ is identically zero on the upper eyelid.

\section{Kontsevich weight forms of Lie graphs}
\label{sec:Kontsevich weight forms of Lie graphs}
We will recall the notion of  Lie graphs and their Kontsevich weight functions and 1-forms.

\begin{defn}\label{defn:Lie graph}
Let $n\geqslant 1$.  A {\it Lie graph $\Gamma$ of type $(n,2)$} 
is a graph consisting of
two finite sets,
the {\it set of vertices} 
$V(\Gamma):=\{\fbox{1},\fbox{2},\circled{$1$},\circled{$2$},\dots,\circled{$n$}\}$
and the {\it set of edges}
$E(\Gamma)\subset V(\Gamma)\times V(\Gamma)$.
The points $\fbox{1}$ and $\fbox{2}$ 
are called  as the {\it ground points}, while the points
$\circled{$1$},\circled{$2$},\dots,\circled{$n$}$
are called as the {\it air points}.
We equip $V(\Gamma)$ with the total order
$\fbox{1}<\fbox{2}<\circled{$1$}<\circled{$2$}<\dots<\circled{$n$}$.

For each $e\in E(\Gamma)$, under the inclusion 
$E(\Gamma)\subset V(\Gamma)\times V(\Gamma)$,
we call the corresponding first (resp. second) component $s(e)$ (resp. $t(e)$)
as the {\it source} (resp. the {\it target}) of $e$
and denote as $e=(s(e),t(e))$.
We equip $E(\Gamma)$ with the lexicographic order induced from 
that of $V(\Gamma)$.
Both $V(\Gamma)$ and $E(\Gamma)$ 
are subject to the following conditions:
\begin{enumerate}
\item An air point fires two edges:
That means there always exist two edges
with the source $\circled{$i$}$ for each $i=1,\dots,n$.
\item An air point is shot by one edge at most:
That means there exists at most one edge with its target  $\circled{$i$}$ for each $i=1,\dots,n$.
\item A ground point never fire edges:
That means there is no edge with its source on ground points.
\item The graph $\Gamma$ becomes a rooted trivalent tree  after we cut off small neighborhoods of ground points:
That means that the graph of $\Gamma$ admits a unique vertex (called {\it the root}) shoot by no edges 
and it gives a rooted trivalent trees
if we regard the vertex as a root and
distinguish all targets of edges firing ground points.  
\end{enumerate}
\end{defn}

Let $\Gamma$ be a Lie graph of type $(n,2)$. 
Let $ \widehat{\frak f}_2$ be the completed free Lie algebra generated by 
two variables $A$ and $B$.
Lie monomial $\Gamma(A,B)\in \widehat{\frak f}_2$ of degree $n+1$
is defined to be the associated element with the root by the following
procedure:  
With $\fbox{1}$ and $\fbox{2}$, we assign $A$ and $B\in\widehat{\frak f}_2$  respectively.
With each internal vertex $v$ firing two edges $e_1=(v,w_1)$ and $e_2=(v,w_2)$
such that $e_1<e_2$,
we assign $[\Gamma_1,\Gamma_2]\in\widehat{\frak f}_2$
where $\Gamma_1$ and  $\Gamma_2\in\widehat{\frak f}_2$
are the corresponding Lie monomials with the vertices $w_1$ and $w_2$ respectively.
Recursively we may assign Lie elements with all vertices of $\Gamma$.

\begin{figure}[h]
\begin{center}
\begin{tikzpicture}
\node (a) at (0,0) {\fbox{1}};
\node (b) at (2,0) {\fbox{2}};
\node (1) at (3,1.5) {\circled{1}};
\node (2) at (-1,2) {\circled{2}};
\node (3) at  (1,3) {\circled{3}};

\draw[->] (1) to (a);
\draw[->] (1) to (b);
\draw[->] (2) to (1);
\draw[->] (2) to (b);
\draw[->] (3) to (2);
\draw[->] (3) to (b);

\end{tikzpicture}
\caption{$\Gamma(A,B)=[B,[B,[A,B]]]$}
\label{Lie graph example}
\end{center}
\end{figure}
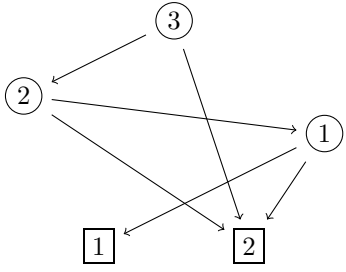
\begin{eg}
Figure \ref{Lie graph example} is an example of Lie graph of type $(3,2)$.
Its root is $\circled{3}$.
The associated Lie elements of the
vertices $\fbox{1},\fbox{2},\circled{$1$},\circled{$2$},\circled{3}$ are 
$A$, $B$, $[A,B]$, $[B,[A,B]]$, $[B,[B,[A,B]]]$ respectively.
\end{eg}
 
Each $e\in E(\Gamma)$ determines a subset $\{s(e),t(e)\}\subset V(\Gamma)$
with $|V(\Gamma)|=n+2$
which yields 
a pull-back 
$\pi:\overline{C}_{n+2,0}\to \overline{C}_{2,0}$.
By composing it with the angle map \eqref{eqn:qngle mqp},
we get a map $\phi_e:\overline{C}_{n+2,0}\to \R/\Z.$
The regular $2n$-forms $\Omega_\Gamma$ on $\overline{C}_{n+2,0}$
(which is  $2n$-dimensional compact space)
associated with $\Gamma$
is given by the ordered exterior product
$$\Omega_\Gamma:=\wedge_{e\in E(\Gamma)} d\phi_e.$$

\begin{defn}
Put $\pi:\overline{C}_{n+2,0}\to \overline{C}_{2,0}$ to be the above projection
induced from the inclusion
$\{\fbox{1},\fbox{2}\}\subset
\{\fbox{1},\fbox{2},\circled{$1$},\circled{$2$},\dots,\circled{$n$}\}$.
The {\it Kontsevich weight function} (see \cite{T}) of $\Gamma$ is the smooth function
$w_\Gamma:\overline{C}_{2,0}\to \C$
defined 
by $w_\Gamma:=
\pi_\ast(\Omega_{\Gamma})$
where $\pi_\ast$ is
the push-forward (the integration along the fiber of the projection $\pi$, cf. \cite{HLTV}), that is, the function
which assigns $\xi\in\overline{C}_{2,0}$ with
$$
w_\Gamma(\xi)
=
\int_{\pi^{-1}(\xi)}\Omega_{\Gamma}\in \C.
$$
\end{defn}

Particularly the special value $w_\Gamma(\RC)$ of the function $w_\Gamma(\xi)$
at $\xi=\RC$ is called the {\it Kontsevich weight} of $\Gamma$. 
It appears as a coefficient of Kontsevich's formula on deformation quantization
in \cite{K03}.
It looks unknown if they are given by MZVs or not.

\begin{defn}
We denote $L\Gamma$ (resp. $R\Gamma$)
to be a graph obtained from $\Gamma$ by adding one more edge $e_L$
from $\fbox{1}$ (resp. $e_R$ from $\fbox{2}$) to the root of $\Gamma$.
The regular $(2n+1)$-form $\Omega_{L\Gamma}$ (resp. $\Omega_{R\Gamma}$) on $\overline{C}_{n+2,0}$
is defined to be
$$
\Omega_{L\Gamma}:=d\phi_{e_L}\wedge \Omega_\Gamma \qquad
(\text{resp.}\quad \Omega_{R\Gamma}:=d\phi_{e_R}\wedge \Omega_\Gamma).
$$
The one-forms
$\omega_{L\Gamma}$ and $\omega_{R\Gamma}$,
which we call the {\it Kontsevich weight forms} of $\Gamma$ here,
are 
the PA
\footnote{
\lq PA' stands for piecewise-algebraic (cf. \cite{KS,HLTV,LV}).}
one-forms of $\overline{C}_{2,0}$
respectively defined by 
$$
\omega_{L\Gamma}:=
\pi_\ast(\Omega_{L\Gamma})
\qquad\text{and} \qquad
\omega_{R\Gamma}:=
\pi_\ast(\Omega_{R\Gamma}),
$$
i.e. they are one-forms respectively defined by
$$
\omega_{L\Gamma}(\xi)
=
\int_{\pi^{-1}(\xi)}\Omega_{L\Gamma}, \quad
\quad \text{and}\quad
\omega_{R\Gamma}(\xi)
=
\int_{\pi^{-1}(\xi)}\Omega_{R\Gamma}
$$
where $\xi$  runs over $\overline{C}_{2,0}$ 
\end{defn}

\section{Main results}
\label{sec:Explicit formula of the AT-associator}
We will recall the definition of the AT-associator and then give a method to calculate its coefficients in Theorem \ref{main theorem}.

Let $\tder_2$ be the Lie algebra consisting of tangential derivations
$\der(\alpha,\beta):\widehat{\frak f}_2\to \widehat{\frak f}_2$
($\alpha,\beta\in\widehat{\frak f}_2$) such that
$A\mapsto [A,\alpha]$ and $B\mapsto [B,\beta]$.
A connection valued there
$$
\omega_\AT=\der\left(\omega_L,\omega_R\right)\in
\tder_2\widehat\otimes\Omega^1_{\PA}(\overline{C}_{2,0})
$$
is introduced in \cite{AT10, T}.
Here $\Omega^1_{\PA}(\overline{C}_{2,0})$ means the space of PA one-forms of
$\overline{C}_{2,0}$
and
\begin{align*}
\omega_L=&\quad d\phi \cdot B+\sum_{n\geqslant 1}\ \sum_{\Gamma\in\LieGra^\mathrm{geom}_{n,2}} \omega_{L\Gamma}\cdot\Gamma(A,B),  \\
\omega_R=&\sigma^*(d\phi) \cdot A+\sum_{n\geqslant 1}\ \sum_{\Gamma\in\LieGra^\mathrm{geom}_{n,2}}  \omega_{R\Gamma}\cdot\Gamma(A,B),
\end{align*}
with the set 
$\LieGra^\mathrm{geom}_{n,2}$ of {\it geometric}
(it means non-labeled) Lie graphs of type $(n,2)$
(cf. Definition \ref{defn:Lie graph}).
We note that both $\Omega_\Gamma$ and $\Gamma(A,B)$ require the order of $E(\Gamma)$ however their product $\Omega_\Gamma\cdot\Gamma(A,B)$
does not (cf. \cite{CKTB}), whence both 
$\omega_L$ and $\omega_R$ do not require labels.
The symbol $\sigma$ stands for the involution of
$\overline{C}_{2,0}$
caused by the switch of $z_1$ and $z_2$.

In \cite{AT10}
they considered the following differential equation on $\overline{C}_{2,0}$
\begin{equation}\label{-AT equation}
dg(\xi)=-g(\xi)\cdot\omega_\AT
\end{equation}
with $\xi\in\overline{C}_{2,0}$,
where the connection $\omega_\AT$
is shown to be flat.
Here $g(\xi)\in \TAut_2:=\exp\tder_{2}$, the pro-algebraic subgroup of $\Aut_2$
consisting of tangential automorphisms
$\Int(\alpha,\beta):\widehat{\frak f}_2\to \widehat{\frak f}_2$
($\alpha,\beta\in\exp\widehat{\frak f}_2$) such that
$A\mapsto \alpha^{-1}A\alpha$ and $B\mapsto \beta^{-1}B\beta$.
They denote its parallel transport (its holonomy) for the straight path from $\alpha_0$ (the position $0$ at the iris, see Figure \ref{parallel}) to $\RC$
by $F_\AT\in\TAut_2$.
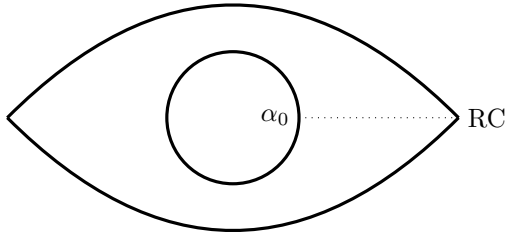
\begin{figure}[h]
\begin{center}
\begin{tikzpicture}
\draw [very thick] (-3,0).. controls  (-1,2) and (1,2)..(3,0); 
\draw [very thick] (-3,0).. controls  (-1,-2) and (1,-2)..(3,0); 
(-3,0).. controls  (-1,-2) and (1,-2)..(3,0);
\draw[very thick] (0,0) circle (25pt);
\draw (3,0) node[right]{RC};
\draw (25pt,0) node[left]{$\alpha_{0}$};
\draw[dotted] (3,0)--(25pt,0);
\end{tikzpicture}
\caption{Parallel transport}
\label{parallel}
\end{center}
\end{figure}

\begin{defn}[\cite{AT10}]
The {\it AT-associator} $\varPhi_{\AT}$ is defined to be 
\begin{equation}\label{defn:AT-associator}
\varPhi_{\AT}:=F^{1,23}_\AT\circ F^{2,3}_\AT\circ (F^{1,2}_\AT)^{-1}\circ (F^{12,3}_\AT)^{-1}
\in\TAut_3.
\end{equation}
Here for any $T=\Int(\alpha,\beta)\in \TAut_2$, we denote
$$T^{1,2}:=\Int\left(\alpha(A,B),\beta(A,B),1\right),\quad
T^{2,3}:=\Int\left(1,\alpha(B,C),\beta(B,C)\right),$$
$$T^{1,23}:=\Int\left(\alpha(A,B+C),\beta(A,B+C),\beta(A,B+C)\right),$$
$$T^{12,3}:=\Int\left(\alpha(A+B,C),\alpha(A+B,C),\beta(A+B,C)\right)$$
in $\TAut_3:=\exp\tder_3$ which is similarly defined to be the group of
tangential automorphisms of the completed free Lie algebra $\widehat{\frak f_3}$
with variables $A$, $B$ and $C$.
\end{defn}

We note that there is a Lie algebra inclusion 
$\widehat{\frak f}_2\hookrightarrow\tder_3$ sending
\begin{equation}
A\mapsto t_{12}:=\der (B,A,0)  \quad \text{and} \quad
B\mapsto t_{23}:=\der (0,C,B)
\end{equation}
which induces an inclusion $\exp\widehat{\frak f}_2\hookrightarrow \TAut_3$.

\begin{thm}[\cite{AT12,SW}]\label{thm:ATSW}
The AT-assocciator $\varPhi_{\AT}$ forms an associator. Namely it belongs
to $\exp\widehat{\frak f}_2$  
($\subset\C\langle\langle A,B\rangle\rangle$) and satisfies the equations 
\cite{Dr} (2.12), (2.13) and (5.3).
Furthermore  it is real (i.e. it
belongs to the real structure $\R\langle\langle A,B\rangle\rangle$)
and even  (i.e. $\varPhi_{\AT}(-A,-B)=\varPhi_{\AT}(A,B)$.)
\end{thm}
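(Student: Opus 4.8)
This statement assembles results of Alekseev--Torossian \cite{AT10, AT12} and \v{S}evera--Willwacher \cite{SW}, so the proof I would give is an exposition of their arguments. The plan is to settle the four assertions --- membership in $\TAut_3$, honest membership in $\exp\widehat{\frak f}_2$ together with the pentagon, the two hexagons, and then reality and evenness --- in that order, since each rests on the preceding one.

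First, $\varPhi_\AT\in\TAut_3$ is immediate from \eqref{defn:AT-associator}. The holonomy $F_\AT$ of the $\tder_2$-valued connection $\omega_\AT$ lies in $\TAut_2$ by construction, and since $\omega_\AT$ is flat (as recalled below \eqref{-AT equation}) it only depends on the endpoints of the chosen path; each of $F^{1,2}_\AT,F^{2,3}_\AT,F^{1,23}_\AT,F^{12,3}_\AT$ lies in $\TAut_3$ by the displayed substitution formulas, and $\TAut_3$ is a group, so their composite $\varPhi_\AT$ does too.

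Next comes the heart of the matter: that $\varPhi_\AT$ in fact lies in the much smaller subgroup $\exp\widehat{\frak f}_2\hookrightarrow\TAut_3$ (the image of $A\mapsto t_{12}$, $B\mapsto t_{23}$) and there satisfies Drinfeld's pentagon \cite{Dr} (2.12). The pentagon identity $\varPhi^{2,3,4}_\AT\varPhi^{1,23,4}_\AT\varPhi^{1,2,3}_\AT=\varPhi^{12,3,4}_\AT\varPhi^{1,2,34}_\AT$ in $\TAut_4$ is forced by the cocycle (groupoid) property of parallel transport combined with the factorization of $\omega_\AT$ along the boundary strata of the compactified configuration spaces $\overline{C}_{n,0}$: reading the five edges of the associahedron $K_4$ as holonomies of $\omega_\AT$ along paths in a suitable $\overline{C}_{n,0}$ and the five vertices as the degenerate configurations that produce the various $F^{i,j}_\AT$, flatness makes the monodromy around this pentagonal loop trivial, which unwinds to \cite{Dr} (2.12); this is \cite{AT10}. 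The promotion from ``associator in $\TAut_3$'' to ``associator in $\exp\widehat{\frak f}_2$'' is \cite{SW}: one identifies $\omega_\AT$, up to gauge, with the connection assembled from Kontsevich's propagator on the configuration-space operad, whose holonomy is known to take values in $\exp\widehat{\frak f}_2$ and to satisfy the infinitesimal braid relations governing it --- equivalently, one invokes the Alekseev--Torossian correspondence \cite{AT12} between Kashiwara--Vergne solutions and associators, under which $F_\AT$ is the distinguished KV solution. I expect this descent to two variables to be the main obstacle: it genuinely requires the operadic/graph-complex input of \cite{SW} (or the KV dictionary of \cite{AT12}) and does not reduce to a short manipulation of the defining integrals.

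Granting the pentagon and $\varPhi_\AT\in\exp\widehat{\frak f}_2$, the two hexagon equations \cite{Dr} (2.13), (5.3) follow from the pentagon alone by the main theorem of \cite{F10}, once the coupling constant $\mu$ is matched; $\mu$ is recovered from the degree-two component $\lambda[A,B]$ of $\log\varPhi_\AT$, which is in turn read off the low-depth data of $\omega_\AT$ --- the terms $d\phi\cdot B$, $\sigma^*(d\phi)\cdot A$ and the first Kontsevich weight forms $\omega_{L\Gamma},\omega_{R\Gamma}$ --- exactly the sort of computation performed in \S\ref{sec:examples}. Reality is then clear: each $\phi_e$ is $\R/\Z$-valued, so $\Omega_\Gamma,\Omega_{L\Gamma},\Omega_{R\Gamma}$ are real PA-forms integrated over real fibers, whence $w_\Gamma,\omega_{L\Gamma},\omega_{R\Gamma}$ and the whole connection $\omega_\AT$ have real coefficients, and the holonomy of a real connection with a real initial value stays in $\exp\tder_2(\R)$; hence $F_\AT$, all the $F^{i,j}_\AT$, and $\varPhi_\AT$ are real. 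Finally, evenness $\varPhi_\AT(-A,-B)=\varPhi_\AT(A,B)$ comes from a reflection symmetry of Kontsevich's eye: the involution induced by $z\mapsto-\bar z$ negates the angle map \eqref{eqn:qngle mqp}, and tracking its effect through the fiber integrations --- where it acts on the $2n$-dimensional fiber by a reflection, contributing a sign $(-1)^n$ that combines with $\iota^*d\phi_{e_L}=-d\phi_{e_L}$ --- together with the accompanying change of base point, one finds that pulling everything back is precisely the substitution $A\mapsto-A$, $B\mapsto-B$; the detailed verification is in \cite{SW} (see also \cite{AT12}).
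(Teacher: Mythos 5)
The paper gives no proof of this statement: Theorem \ref{thm:ATSW} is imported wholesale from the literature, with the ``associator in $\TAut_3$'' part due to \cite{AT10} and the descent to $\exp\widehat{\frak f}_2$ together with reality and evenness due to \cite{SW} (see also \cite{AT12}). So your proposal is not competing with an argument in the text; read as a survey of the cited papers it is organized sensibly and correctly identifies the \v{S}evera--Willwacher step as the one carrying the real weight.

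One logical point should be corrected. You claim that the descent from ``associator in $\TAut_3$'' to ``associator in $\exp\widehat{\frak f}_2$'' can \emph{equivalently} be obtained from the Kashiwara--Vergne dictionary of \cite{AT12}, under which $F_\AT$ is a distinguished KV solution. That implication runs the wrong way: \cite{AT12} manufactures a KV solution \emph{from} an associator, and the fact that $F_\AT$ solves the generalized Kashiwara--Vergne problem (Lemma \ref{FAT in SolKV} of this paper) only yields pentagon-type relations for the combination \eqref{defn:AT-associator} inside $\TAut_4$; it does not by itself place $\varPhi_\AT$ in the image of $\exp\widehat{\frak f}_2\hookrightarrow\TAut_3$. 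Indeed the present paper uses the two facts in the opposite order: it takes Theorem \ref{thm:ATSW} as given, and only then combines $F_\AT\in\mathrm{Sol KV}(\C)$ with \cite{AT12} Proposition 7.2 to conclude $F_\AT\equiv\mu_{\varPhi_{\AT}}\pmod{\Int_2}$. So the operadic/graph-complex input of \cite{SW} is not optional. Your remaining points are sound in outline: the hexagons do follow from the pentagon via \cite{F10} once the degree-two coefficient is matched and seen to be nonzero; reality follows since the angle forms and the fiber integrations are real; and evenness does come from the reflection $z\mapsto-\bar z$, where the sign $(-1)^{2n+1}\cdot(-1)^{n}=(-1)^{n+1}$ on $\omega_{L\Gamma}$, $\omega_{R\Gamma}$ matches the degree $n+1$ of $\Gamma(A,B)$ --- but note that this reflection also exchanges $\RC$ with the left corner, so the base-point bookkeeping you only gesture at is precisely where the actual work of \cite{SW} lies.
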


As we saw in the introduction,
all the coefficients of the KZ-associator  $\varPhi_{\KZ}$ are 
known to be explicitly described in terms of multiple zeta values,
which are iterated integrals of two differential 1-forms of $dt/t$ and $dt/(1-t)$ on $\mathbb P^1\setminus\{0,1,\infty\}$.
The following theorem would be its AT-counterpart, that is,
we give a method to present all the coefficients of 
the AT-associator  $\varPhi_{\AT}$
in terms of linear combinations of iterated integrals of Kontsevich weight forms, on $\overline{C}_{2,0}$, of Lie graphs .

\begin{thm}\label{main theorem}
We have
\begin{equation}\label{main formula}
\varPhi_{\AT}=\left(\mathcal{P}\exp\int_{\RC}^{\alpha_0}
(l_{\widehat\omega}+D_{\widehat\omega})\right)
(1) \
\in\C\langle\langle A,B\rangle\rangle.
\end{equation}
Here
$l_{\widehat\omega}$ is the left multiplication by ${\widehat\omega}$ and
$D_{\widehat\omega}$ is given by
$$
D_{\widehat\omega}
:=\der\left(0,\widehat{\omega}\right)\in
\tder_2\widehat\otimes\Omega^1_{\PA}(\overline{C}_{2,0})
$$
with
\begin{equation}
\widehat{\omega}
:=\sum_{n\geqslant 1} \ \sum_{\Gamma\in\LieGra_{n,2}}
\widehat\omega_{\Gamma}\cdot\Gamma(A,B)
\quad \text{ and }\quad
\widehat\omega_{\Gamma}:=\omega_{R\Gamma}-\omega_{L\Gamma}.
\end{equation}
and for any one-form $\Omega\in\Omega^1_{\PA}(\overline{C}_{2,0})$ we define 
\begin{align*}
\mathcal{P}&\exp\int_{\RC}^{\alpha_0}\Omega
:=\id+\int_{\RC}^{\alpha_0}\Omega+
\int_{\RC}^{\alpha_0}\Omega\cdot\Omega +
\cdots \\
&:=
\id+\int_{0<s_1<1}\ell^*\Omega(s_1)+
\int_{0<s_1<s_2<1}\ell^*\Omega(s_2)\wedge\ell^*\Omega(s_1) +\cdots.
\end{align*}
\end{thm}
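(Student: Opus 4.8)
The plan is to unwind the definition of $\varPhi_{\AT}$ through $F_{\AT}$ and the AT-equation \eqref{-AT equation}, and to reduce the holonomy of the $\tder_2$-valued connection $\omega_{\AT}$ to the holonomy of the much simpler connection $l_{\widehat\omega}+D_{\widehat\omega}$ acting on $\C\langle\langle A,B\rangle\rangle$. First I would recall that, by Theorem \ref{thm:ATSW}, $\varPhi_{\AT}$ already lies in $\exp\widehat{\frak f}_2\subset\C\langle\langle A,B\rangle\rangle$, so it suffices to identify it as a power series; this lets me apply any $\TAut_2$-element $T=\Int(\alpha,\beta)$ to the unit $1$ and read off $T(1)$. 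Tracking the defining product $F^{1,23}_\AT\circ F^{2,3}_\AT\circ (F^{1,2}_\AT)^{-1}\circ (F^{12,3}_\AT)^{-1}$ applied to $1$, and using the inclusion $\widehat{\frak f}_2\hookrightarrow\tder_3$ together with the known fact that $F_{\AT}$ absorbs the combinatorial hexagon/pentagon bookkeeping, I expect the four factors to collapse so that evaluating $\varPhi_{\AT}$ on $1$ reduces to evaluating a single parallel transport of $F_{\AT}$-type on $1$; this is the step where I would lean most heavily on \cite{AT10, SW}.

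Next I would set up the differential equation satisfied by $g(\xi)(1)\in\C\langle\langle A,B\rangle\rangle$. Writing $g(\xi)=\Int(\alpha(\xi),\beta(\xi))$ with $\alpha,\beta\in\exp\widehat{\frak f}_2$ and $\omega_{\AT}=\der(\omega_L,\omega_R)$, equation \eqref{-AT equation} becomes a coupled system for $\alpha$ and $\beta$. The key observation is that $g(\xi)(1)=1$ is preserved by all of $\TAut_2$ acting on the \emph{unit}, so instead I must track how $g(\xi)$ transports a nontrivial element; the natural choice is to follow $g(\xi)^{-1}(1)$ or, better, to compute $\frac{d}{d\xi}\bigl(g(\xi)(w)\bigr)$ for a word $w$ and see that only the combination $\omega_R-\omega_L=\widehat\omega$ survives after one passes from the tangential action on $A$ and $B$ to the action on a word built out of $t_{12}$ and $t_{23}$. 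Concretely, $\der(\omega_L,\omega_R)$ sends $A\mapsto[A,\omega_L]$ and $B\mapsto[B,\omega_R]$; on the image of $\widehat{\frak f}_2\hookrightarrow\tder_3$ the inner parts combine and the $d\phi\cdot B$ resp. $\sigma^*(d\phi)\cdot A$ pieces cancel against each other (recall $\phi\equiv 0$ on the upper eyelid and $\sigma$ swaps the eyelids), leaving exactly $D_{\widehat\omega}=\der(0,\widehat\omega)$ plus the left multiplication $l_{\widehat\omega}$ coming from the change of trivialization. This identification of the effective connection as $l_{\widehat\omega}+D_{\widehat\omega}$ is the technical heart of the argument.

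Once the connection is identified, the iterated-integral formula is formal: the solution of a linear ODE $dG=G\cdot(l_{\widehat\omega}+D_{\widehat\omega})$ on an interval, normalized at one endpoint, is exactly the path-ordered exponential $\mathcal{P}\exp\int(l_{\widehat\omega}+D_{\widehat\omega})$ pulled back along the straight path $\ell$ from $\RC$ to $\alpha_0$, expanded as the series of iterated integrals written in the statement. Flatness of $\omega_{\AT}$ (hence of its reduction) guarantees that the series converges in each degree and is independent of reparametrization, and applying the resulting operator to $1$ gives the right-hand side of \eqref{main formula}. I would close by checking degree by degree that the leading term is $\id(1)=1$ and that the coefficient of each word is the asserted linear combination of iterated integrals of the $\widehat\omega_\Gamma=\omega_{R\Gamma}-\omega_{L\Gamma}$.

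The main obstacle I anticipate is the second paragraph: correctly bookkeeping the passage from the $\tder_2$-action (which is trivial on $1$) to a genuine $\C\langle\langle A,B\rangle\rangle$-valued transport, i.e.\ justifying why the left-multiplication term $l_{\widehat\omega}$ appears and why precisely the difference $\omega_R-\omega_L$ — and not $\omega_L$ and $\omega_R$ separately — governs the evolution. This requires a careful use of the embedding $\widehat{\frak f}_2\hookrightarrow\tder_3$, the relation between $F_{\AT}$ and the associator, and the vanishing properties of $\phi$ on the boundary strata of Kontsevich's eye; everything after that is standard Chen iterated-integral formalism.
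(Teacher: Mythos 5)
Your overall strategy --- ``apply the defining product of four factors to $1$ and let them collapse to a single parallel transport'' --- does not go through, and the missing bridge is precisely the content of the paper's proof. A tangential automorphism fixes $1$, as you yourself note, and the pivot you then make (tracking $g(\xi)(w)$ for nontrivial words $w$) never explains how the element $\varPhi_{\AT}=F^{1,23}_\AT\circ F^{2,3}_\AT\circ (F^{1,2}_\AT)^{-1}\circ (F^{12,3}_\AT)^{-1}\in\TAut_3$ gets identified with the holonomy of a connection on $\overline{C}_{2,0}$ acting on $\C\langle\langle A,B\rangle\rangle$. The paper's route is indirect and goes through Kashiwara--Vergne theory: (i) by \cite{AET} Theorem 2 one has $\varPhi_{\AT}(t_{12},t_{23})=\mu^{1,23}_{\varPhi_{\AT}}\circ\mu^{2,3}_{\varPhi_{\AT}}\circ\{\mu^{1,2}_{\varPhi_{\AT}}\}^{-1}\circ\{\mu^{12,3}_{\varPhi_{\AT}}\}^{-1}$ for an explicit $\mu_{\varPhi_{\AT}}\in\TAut_2$; (ii) one proves $F_{\AT}\in\mathrm{Sol KV}(\C)$ (Lemma \ref{FAT in SolKV}, via the differential equations for $\mathrm{ch}_\xi$ and $\duf_\xi$ from \cite{AT10}); (iii) since $F_{\AT}$ and $\mu_{\varPhi_{\AT}}$ are two KV solutions producing the same associator, \cite{AT12} Proposition 7.2 forces $F_{\AT}\equiv\mu_{\varPhi_{\AT}}\pmod{\Int_2}$; (iv) the $2$- and $3$-cycle relations normalize $\mu_{\varPhi_{\AT}}$ to $\mu_{\varPhi_{\AT},0}$, which fixes $A$ and conjugates $B$ by $\varPhi_{\AT}$; (v) the left-multiplication term $l_{\widehat\omega}$ enters through the Deligne--Goncharov torsor isomorphism $\NTAut\Pi_0\simeq\NTAut\Pi_{10}$, $\mu_{\Phi,0}\mapsto l_\Phi\circ\mu_{\Phi,0}$, and only then does applying the operator to $1$ yield $\varPhi_{\AT}$ rather than $1$. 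Steps (ii), (iii) and (v) have no counterpart in your proposal; without (iii) in particular there is no reason why the parallel transport of $\omega_{\AT}$ should know the power series $\varPhi_{\AT}$ at all.

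A secondary but genuine error: you assert that the degree-one terms $d\phi\cdot B$ and $\sigma^*(d\phi)\cdot A$ ``cancel against each other'' because $\phi$ vanishes on the upper eyelid and $\sigma$ swaps the eyelids. That is not the mechanism. The reduction from $\omega_{\AT}=\der(\omega_L,\omega_R)$ to $D_{\widehat\omega}=\der(0,\widehat\omega)$ is a statement modulo inner derivations: since $\ad(\gamma)=\der(-\gamma,-\gamma)$ one has $\der(\omega_L,\omega_R)\equiv\der(0,\omega_R-\omega_L)$ modulo $\Int$, and the degree-one part $\der(d\phi\cdot B,\,\sigma^*(d\phi)\cdot A)$ is itself inner, being equal to $\der(\gamma,\gamma)$ with $\gamma=d\phi\cdot B+\sigma^*(d\phi)\cdot A$ because $\der(cA,c'B)=0$ for scalar-valued forms $c,c'$. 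This is exactly why Lemma \ref{lem:muAT0=pAT} characterizes $D_{\widehat\omega}$ as the unique representative of $\omega_{\AT}$ of degree $\geqslant 2$ modulo inner derivations, and why the whole comparison must be carried out in $\TAut_2/\Int_2$ and then rigidified by the uniqueness of the normalized automorphism $\mu_{\varPhi_{\AT},0}$ --- a normalization step your argument does not supply.
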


The right hand side of \eqref{main formula} 
is directly computable
as will be given in \S\ref{sec:examples} \eqref{direct formula}. 

\begin{proof}
Since $\varPhi_{\AT}$ is an associator,
we have
\begin{equation}\label{PhiAT=muAT}
\varPhi_{\AT}(t_{12},t_{23})=\mu^{1,23}_{\varPhi_{\AT}}\circ \mu^{2,3}_{\varPhi_{\AT}}\circ 
\{\mu^{1,2}_{\varPhi_{\AT}}\}^{-1}\circ \{\mu^{12,3}_{\varPhi_{\AT}}\}^{-1}
\end{equation}
by \cite{AET} Theorem 2.
Here $\mu_{\varPhi_{\AT}}$ is the automorphism of
$\C\langle\langle A,B\rangle\rangle$
such that
\begin{align*}
A&\mapsto \qquad\qquad \varPhi_{\AT}(A,C)\cdot A\cdot \varPhi_{\AT}(A,C)^{-1},\\
B&\mapsto \exp(C/2)\varPhi_{\AT}(B,C)\cdot B\cdot\varPhi_{\AT}(B,C)^{-1}\exp(-C/2)
\end{align*}
with $C=-A-B$.
We note that
$\mu_{\varPhi_{\AT}}(\CBH(A,B))=A+B$.

Let $\mathrm{Sol KV}(\C)$ be the set  of solutions  of the generalized Kashiwara-Vergne problem,  that is, the set of 
$p\in \TAut_2$
satisfying
$p(\CBH(A,B))=A+B$
and the coboundary Jacobian condition 
$
\delta\circ J(p)=0.
$
Here
$J$ stands for the Jacobian cocycle $J:\TAut_2\to\frak{tr}_2$
and $\delta$ denotes the differential map $\delta:\frak{tr}_n\to\frak{tr}_{n+1}$
for $n=1,2,\dots$
with the trace space $\frak{tr}_n$
(for their precise definitions see \cite{AT10, AET}).

Recall that the element $F_\AT\in\TAut_2$ was defined to be the parallel transport of the differential equation
\eqref{-AT equation} from $\alpha_0$ to $\RC$.
By the following lemma, it is also regarded as  a parallel transport
of the differential equation \eqref{AT-equation},
which we call  {\it the AT-equation}.

\begin{lem}\label{FAT=ptAT}
The element $F_\AT$ is also defined to be the parallel transport, for
the straight path $\ell(s)$ ($0\leqslant s \leqslant 1$)
from RC to $\alpha_0$ in Figure \ref{parallel},
of the differential equation
(
{\it the AT-equation})
on $\overline{C}_{2,0}$
\begin{equation}\label{AT-equation}
dG(\xi)=\omega_\AT\cdot G(\xi)
\end{equation}
with $G(\xi)\in \TAut_2$.
\end{lem}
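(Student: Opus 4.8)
The plan is to relate the two differential equations \eqref{-AT equation} and \eqref{AT-equation} by an elementary change of variable on the group $\TAut_2$, namely by passing to the pointwise inverse. Concretely, suppose $g(\xi)$ is a solution of \eqref{-AT equation}, i.e. $dg(\xi)=-g(\xi)\cdot\omega_\AT$. Set $G(\xi):=g(\xi)^{-1}$. Then I would differentiate the relation $g(\xi)\cdot G(\xi)=\id$ to obtain $dg(\xi)\cdot G(\xi)+g(\xi)\cdot dG(\xi)=0$, hence $dG(\xi)=-g(\xi)^{-1}\cdot dg(\xi)\cdot G(\xi)=-G(\xi)\cdot(-g(\xi)\cdot\omega_\AT)\cdot G(\xi)=G(\xi)\cdot\omega_\AT\cdot G(\xi)\cdot g(\xi)\cdot G(\xi)$. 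This is not literally \eqref{AT-equation}; the correct bookkeeping is that \eqref{AT-equation} is a \emph{left} equation $dG=\omega_\AT\cdot G$, so one should instead check directly that if $g$ solves the \emph{right} equation $dg=-g\cdot\omega_\AT$ then $G=g^{-1}$ solves $dG=\omega_\AT\cdot G$: indeed $dG=d(g^{-1})=-g^{-1}\,dg\,g^{-1}=-g^{-1}(-g\cdot\omega_\AT)g^{-1}=\omega_\AT\cdot g^{-1}=\omega_\AT\cdot G$. So the inversion map interchanges solutions of \eqref{-AT equation} with solutions of \eqref{AT-equation}.

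Next I would translate this correspondence of solutions into the statement about parallel transports. The parallel transport $F_\AT$ for \eqref{-AT equation} along the straight path from $\alpha_0$ to $\RC$ is by definition $g_{\alpha_0}(\RC)$, where $g_{\alpha_0}$ is the unique solution of \eqref{-AT equation} with initial value $g_{\alpha_0}(\alpha_0)=\id$. Under inversion, $G:=g_{\alpha_0}^{-1}$ solves \eqref{AT-equation} and satisfies $G(\alpha_0)=\id$. Now I must compare this with the parallel transport of \eqref{AT-equation} along $\ell(s)$ running from $\RC$ to $\alpha_0$, whose value is $H(\alpha_0)$ for the solution $H$ of \eqref{AT-equation} normalized by $H(\RC)=\id$. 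Since $G$ and $H$ both solve the same (flat) equation \eqref{AT-equation}, they differ by right multiplication by a constant: $G(\xi)=H(\xi)\cdot c$ for some fixed $c\in\TAut_2$, where $c=H(\RC)^{-1}G(\RC)=G(\RC)=g_{\alpha_0}(\RC)^{-1}=F_\AT^{-1}$. Evaluating at $\alpha_0$ gives $\id=G(\alpha_0)=H(\alpha_0)\cdot F_\AT^{-1}$, hence $H(\alpha_0)=F_\AT$. This is exactly the assertion of Lemma \ref{FAT=ptAT}: the parallel transport of the AT-equation \eqref{AT-equation} from $\RC$ to $\alpha_0$ equals $F_\AT$.

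I do not anticipate a serious obstacle here; this is essentially formal manipulation of a flat connection on a group-valued bundle, together with careful attention to the left/right conventions and to the reversal of the endpoints of the path (which is precisely what makes inversion, rather than some other operation, the right symmetry). The one point deserving genuine care is the direction of path composition versus the side on which the connection acts: I would state once and for all the convention that a \emph{left} equation $dG=\Omega\cdot G$ has parallel transport composing contravariantly (so reversing the path corresponds to inversion on the group), check the two normalizations $G(\alpha_0)=\id$ versus $H(\RC)=\id$ against each other as above, and confirm that the path $\ell(s)$ from $\RC$ to $\alpha_0$ is indeed the reversal of the straight path from $\alpha_0$ to $\RC$ used to define $F_\AT$ (both being the straight segment joining $\alpha_0$ and $\RC$ in Figure \ref{parallel}). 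Flatness of $\omega_\AT$, already recorded in the text after \eqref{-AT equation}, guarantees that all these parallel transports are well-defined and path-homotopy invariant, so no further analytic input is needed.
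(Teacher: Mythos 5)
Your proof is correct and follows essentially the same route as the paper: you check that inversion carries solutions of $dg=-g\cdot\omega_\AT$ to solutions of $dG=\omega_\AT\cdot G$, then use uniqueness up to right multiplication by a constant to compare the solution normalized at $\alpha_0$ with the one normalized at $\RC$, exactly as in the paper's argument with $g_0$ and $G_\RC$. (The only blemish is the garbled intermediate computation in your first paragraph, which you correctly discard and redo.)
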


\begin{proof}
Let $g_0(\xi)$ be the unique local solution of \eqref{-AT equation}
with the initial condition $g_0(\alpha_0)=1$ and
$G_\RC(\xi)$ be the unique local solution of \eqref{-AT equation}
with the initial condition $G_\RC(\RC)=1$.
We can check that $g_0(\xi)^{-1}$ is a solution of the AT-equation 
\eqref{AT-equation} by direct computations.
Then by the uniqueness of the solution of the differential equation, 
$G_\RC(\xi)$ and $g_0(\xi)^{-1}$ are different by the right multiplication.
Since  $G_\RC(\RC)=1$ and $g_0(\RC)^{-1}=F_\AT^{-1}$,
we must have 
$
G_\RC(\xi)=g_0(\xi)^{-1}F_\AT.
$
So the parallel transport which we want is
$G_\RC(\alpha_0)=g_0(\alpha_0)^{-1}F_\AT=F_\AT$.
\end{proof}
By Lemma \ref{FAT=ptAT}, 
we have 
\begin{equation}\label{FAT=Pexp}
F_\AT=\mathcal{P}\exp\int_{\RC}^{\alpha_0}\omega_{\AT} \in \TAut_2.
\end{equation}


\begin{lem}\label{FAT in SolKV}
$F_\AT\in\mathrm{Sol KV}(\C).$
\end{lem}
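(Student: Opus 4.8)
The plan is to verify directly that $F_\AT$ satisfies the two defining conditions of a Kashiwara--Vergne solution: the normalization $p(\CBH(A,B))=A+B$ and the coboundary Jacobian condition $\delta\circ J(p)=0$. I would begin from the expression \eqref{FAT=Pexp}, $F_\AT=\mathcal P\exp\int_{\RC}^{\alpha_0}\omega_\AT$, and exploit the geometric description of the AT-connection. The natural strategy is to relate $F_\AT$ to the defining property of the AT-associator via formula \eqref{defn:AT-associator}: since $\varPhi_\AT=F^{1,23}_\AT\circ F^{2,3}_\AT\circ (F^{1,2}_\AT)^{-1}\circ (F^{12,3}_\AT)^{-1}$ is an associator by Theorem \ref{thm:ATSW}, and associators are known (by Alekseev--Torossian, via the results quoted around \eqref{PhiAT=muAT}) to produce Kashiwara--Vergne solutions, one wants to trace this back to $F_\AT$ itself.

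The key steps I would carry out, in order, are as follows. First, establish the normalization $F_\AT(\CBH(A,B))=A+B$. Here I would use the geometry of Kontsevich's eye: the endpoints $\RC$ and $\alpha_0$ of the parallel transport lie on special strata, and the restriction of $\omega_\AT$ to the relevant boundary faces has a controlled form. The element $\CBH(A,B)$ — equivalently $\log(e^A e^B)$ — is singled out because $t_{12}+t_{23}$ corresponds to it under the embedding $A\mapsto t_{12}$, $B\mapsto t_{23}$, and the AT-connection is built so that $\exp(A+B)$ is parallel; concretely, one checks that $\der(\omega_L,\omega_R)$ annihilates $A+B$ up to the total differential $d\phi$-terms, so that $F_\AT$ conjugates $e^Ae^B$ to $e^{A+B}$. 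Second, verify $\delta\circ J(F_\AT)=0$. This is where the divergence/Jacobian cocycle enters, and the natural route is Alekseev--Torossian's original computation: the 1-form $\omega_\AT$ is closed as a $\tder_2$-valued form, and the relevant property is that the divergence $\mathrm{div}$ of the tangential derivations appearing in $\omega_\AT$ is itself exact as a trace-valued form on $\overline{C}_{2,0}$, so that upon integrating, $J(F_\AT)$ becomes a coboundary. One packages this by showing $J(F_\AT)=\delta(\text{something})$ directly from the holonomy, or alternatively by transporting along the path and using that the boundary contributions at $\RC$ and $\alpha_0$ vanish.

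I expect the main obstacle to be the second condition, the coboundary Jacobian condition $\delta\circ J(F_\AT)=0$, because the Jacobian cocycle $J$ is nonlinear (it is a cocycle, not a homomorphism) and its behavior under parallel transport is subtle. The cleanest approach is probably to avoid recomputing $J$ from scratch and instead invoke the already-established fact that the AT-associator $\varPhi_\AT$ yields a Kashiwara--Vergne solution $\mu_{\varPhi_\AT}$ (which follows from Theorem \ref{thm:ATSW} together with \cite{AET} Theorem 2 as in \eqref{PhiAT=muAT}), and then use the defining relation \eqref{defn:AT-associator} expressing $\varPhi_\AT$ in terms of the $F_\AT^{i,j}$ to deduce the Jacobian condition for $F_\AT$ alone. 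The residual difficulty is that \eqref{defn:AT-associator} is a relation in $\TAut_3$, so extracting from it a statement purely about $F_\AT\in\TAut_2$ requires knowing that $F_\AT$ itself — not merely the composite — satisfies $\delta\circ J(F_\AT)=0$; this is essentially the content of Alekseev--Torossian's theorem that $F_\AT$ solves the Kashiwara--Vergne problem, so I would ultimately cite \cite{AT10} for this step while recording the normalization $F_\AT(\CBH(A,B))=A+B$ as the one piece that follows transparently from the geometric setup described above.
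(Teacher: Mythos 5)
You correctly isolate the two conditions to check, and your instinct for the first one---that $\omega_\AT$ is built so that something interpolating $A+B$ and $\CBH(A,B)$ is parallel---is the right one, but the proof needs the specific object realizing it: the family $\mathrm{ch}_\xi\in\widehat{\frak f}_2$ of \cite{AT10} (equation (8)), which satisfies $d\,\mathrm{ch}_\xi=\omega_\AT\cdot\mathrm{ch}_\xi$ with $\mathrm{ch}_{\alpha_0}=A+B$ and $\mathrm{ch}_{\RC}=\CBH(A,B)$. Combined with $dg_0=-g_0\cdot\omega_\AT$ this makes $g_0(\xi)(\mathrm{ch}_\xi)$ constant along the path, whence $F_\AT(\CBH(A,B))=A+B$. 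Your formulation (``$\der(\omega_L,\omega_R)$ annihilates $A+B$ up to $d\phi$-terms'') is not what happens: $A+B$ is not annihilated by the connection, it is the value at $\alpha_0$ of a parallel section whose value at $\RC$ is $\CBH(A,B)$.

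For the second condition, neither of your two routes closes the argument. The detour through the associator property of $\varPhi_\AT$ is, as you yourself observe, circular: the relation \eqref{defn:AT-associator} is an identity in $\TAut_3$ about the composite, and extracting $\delta\circ J(F_\AT)=0$ from it presupposes the lemma. And the claim that $\mathrm{div}(\omega_\AT)$ is exact so that ``$J(F_\AT)$ becomes a coboundary'' is not the mechanism (nor is it the right conclusion: $\delta\circ J(p)=0$ says $J(p)\in\ker\delta$, not that $J(p)$ is a coboundary). The argument the paper gives, and which you would need, is: the cocycle property $J(gh)=J(g)+g\cdot J(h)$ yields the inhomogeneous equation $dJ(g_0(\xi))=\omega_\AT\cdot J(g_0(\xi))+\mathrm{div}(\omega_\AT)$; the function $\duf_\xi$ of \cite{AT10} (equation (9)) satisfies the same inhomogeneous equation with $\duf_{\alpha_0}=0$; hence $g_0(\xi)\cdot\bigl\{\duf_\xi-J(g_0(\xi))\bigr\}$ is constant, so $J(F_\AT)=\duf_{\RC}$, which is the Duflo function and lies in $\ker\delta$. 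Your final fallback of simply citing \cite{AT10} for $F_\AT\in\mathrm{Sol KV}(\C)$ is a defensible reference but it is not a proof; it outsources exactly the content the lemma is meant to supply.
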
				

\begin{proof}
By the differential equation \cite{AT10} (8) for 
$\mathrm{ch}_\xi\in\widehat{\frak f}_2$ ($\xi\in\overline{C}_{2,0}$)
$$
d\mathrm{ch}_\xi=\omega_\AT\cdot \mathrm{ch}_\xi ,
$$
$g_0(\xi)(\mathrm{ch}_\xi)$ is constant where $g_0(\xi)$ is the solution of \eqref{-AT equation} introduced in the proof of Lemma \ref{FAT=ptAT}.
We have $F_{\AT}(\CBH(A,B))=A+B$
because $g_0(\alpha_0)=\id$, $g_0(\RC)=F_\AT$
and
$\mathrm{ch}_{\alpha_0}=A+B$, $\mathrm{ch}_{\RC}=\CBH(A,B)$.

While we have a differential equation on $\overline{C}_{2,0}$
$$
dJ(g_0(\xi))=\omega_\AT\cdot J(g_0(\xi))+\mathrm{div}(\omega_\AT)
$$
here $\mathrm{div}:\tder_2\to \frak{tr}_2$ is the divergence cocycle
which integrates to the Jacobian cocycle $J$  (cf. \cite{AT10,AT12}).
It is verified by considering a tangent vector $v$ at the point $\xi\in \overline{C}_{2,0}$ 
and evaluating  the both hands sides at $v$. 
In more detail, 
\begin{align*}
v(J(g_0(\xi))) &= d/dt \{J\left(g_0(\xi+tv)\right) - J\left(g_0(\xi)\right)\} |_{t=0} \\
            &= d/dt \{ J\left(g_0(\xi+tv) g_0(\xi)^{-1} g_0(\xi)\right) - J\left(g_0(\xi)\right)\} |_{t=0}\\
            &= d/dt \{ J\left(g_0(\xi+tv) g_0(\xi)^{-1}\right) + \left(g_0(\xi+tv) g_0(\xi)^{-1}\right) J\left(g_0(\xi)\right)\}|_{t=0}\\
            &= \mathrm{div}\left(\omega_\AT(v)\right) + \omega_\AT(v) J\left(g_0(\xi)\right).
\end{align*}
By combining the above differential equation with the differential equation \cite{AT10} (9) for $\duf_\xi\in\frak{tr}_2$
$$
d\duf_\xi(A,B)=\omega_\AT\cdot \duf_\xi(A,B)+\mathrm{div}(\omega_\AT) ,
$$
we obtain that
$
g_0(\xi)\cdot \bigl\{\duf_\xi -J(g_0(\xi))\bigr\}
$
is constant.
By 
$\duf_{\alpha_0}=0$, 
we have $\duf_\xi =J(g_0(\xi))$.
Since $\duf_{\RC}$ is the Duflo function which lies on $\ker \delta$,
we have
$\delta\circ J(F_\AT)=0$ by  $g_0(\RC)=F_\AT$.
\end{proof}

We have $F_\AT\in\mathrm{Sol KV}(\C)$ by Lemma \ref{FAT in SolKV} and
$\mu_\varphi\in\mathrm{Sol KV}(\C)$ for all associators $\varphi\in\exp\widehat{\frak f}_2$
by \cite{AT12, AET}.
Both $F_\AT$ and $\mu_{\varPhi_{\AT}}$ give rise to the same $\varPhi=\varPhi_{\AT}$ by
\eqref{defn:AT-associator} and \eqref{PhiAT=muAT}.
So, by \cite{AT12} Proposition 7.2
\footnote{
We follow the convention in \cite{AET}, where products are expressed in a way opposite to those of \cite{AT12}.},
we have $F_\AT=\exp (\lambda t)\circ \mu_{\varPhi_{\AT}}$ for some $\lambda\in\C$
and $t=\der(B,A)=\ad(-A-B)$.
Since $t$ is an inner derivation, we have
\begin{equation}\label{FAT=muAT}
F_\AT \equiv \mu_{\varPhi_{\AT}}  \pmod{\Int_2}
\end{equation}
where $\Int_2$ is the (normal) subgroup of $\TAut_2$
consisting of inner automorphisms.

By the 2- and 3-cycle relation (\cite{Dr} (2.12) and (5.3)) for  $\varPhi_\AT$,
\begin{equation}\label{muAT=mu0AT}
\mu_{\varPhi_{\AT}} \equiv \mu_{\varPhi_{\AT},0} \pmod{\Int_2}
\end{equation}
where $\mu_{\varPhi_{\AT},0}$
is the automorphism such that 
\begin{align}\label{NTAut_2}
A\quad &\mapsto \qquad\qquad\qquad A  ,\\
B\quad &\mapsto \varPhi_{\AT}(A,B)^{-1}\cdot B \cdot\varPhi_{\AT}(A,B). \notag
\end{align}

\begin{lem}\label{lem:muAT0=pAT}
The automorphism $\mu_{\varPhi_{\AT},0}$
is also given by the parallel transport of the differential equation
\begin{equation}\label{normalized AT-equation}
dG=D_{\widehat\omega}\cdot G
\end{equation}
from $\RC$ to $\alpha_0$, i.e.
\begin{equation}\label{mu0=Pexp}
\mu_{\varPhi_{\AT},0}=\mathcal{P}\exp\int_{\RC}^{\alpha_0}D_{\widehat\omega}. 
\end{equation}
\end{lem}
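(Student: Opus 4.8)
The plan is to track how the full AT-connection $\omega_\AT=\der(\omega_L,\omega_R)$ degenerates into $D_{\widehat\omega}=\der(0,\widehat\omega)$ once we pass to the quotient by inner automorphisms, and to show this degeneration is compatible with parallel transport. First I would recall from \eqref{FAT=Pexp} that $F_\AT=\mathcal P\exp\int_{\RC}^{\alpha_0}\omega_\AT$, so that combining with \eqref{FAT=muAT} and \eqref{muAT=mu0AT} we already know $\mathcal P\exp\int_{\RC}^{\alpha_0}\omega_\AT\equiv\mu_{\varPhi_{\AT},0}\pmod{\Int_2}$. The task is therefore to replace $\omega_\AT$ by $D_{\widehat\omega}$ inside the path-ordered exponential without changing the class modulo $\Int_2$, i.e. to show the two $\TAut_2$-valued parallel transports differ only by a (path-dependent) inner automorphism, and then to pin down that the specific transport of $D_{\widehat\omega}$ lands exactly on the normalized automorphism $\mu_{\varPhi_{\AT},0}$ of \eqref{NTAut_2} rather than some inner modification of it.

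The key observation is a gauge transformation internal to $\tder_2$. Writing $\omega_\AT=\der(\omega_L,\omega_R)$, I would exhibit a $\widehat{\frak f}_2$-valued one-form $\eta$ (built from $\omega_L$, essentially $\eta=\phi\cdot B$ plus the $\omega_{L\Gamma}\cdot\Gamma(A,B)$ terms, so that $d\phi\cdot B+\sum\omega_{L\Gamma}\cdot\Gamma=\omega_L$) such that conjugating $\omega_\AT$ by $\exp$ of a primitive of $\eta$ kills the first slot: concretely, $\der(\omega_L,\omega_R)$ and $\der(0,\omega_R-\omega_L)=\der(0,\widehat\omega)=D_{\widehat\omega}$ are gauge-equivalent connections on $\overline C_{2,0}$, because the difference $\der(\omega_L,\omega_L)=\ad_{\text{something}}$ is the inner part $l_{\omega_L}$-type term — more precisely $\der(\alpha,\alpha)$ for $\alpha=\omega_L$ acts as the inner derivation $\xi\mapsto[\xi,\omega_L]$, hence is a pure gauge direction. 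I would make this precise via the standard fact that $\tder_2=\ntder_2\oplus(\text{inner})$, where $\ntder_2$ consists of derivations $\der(0,\beta)$; this is exactly the decomposition underlying \eqref{NTAut_2}. Under this splitting, $\omega_\AT$ projects to $D_{\widehat\omega}$, and the projection intertwines parallel transports up to inner automorphisms, giving $\mathcal P\exp\int_{\RC}^{\alpha_0}\omega_\AT\equiv\mathcal P\exp\int_{\RC}^{\alpha_0}D_{\widehat\omega}\pmod{\Int_2}$.

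To finish, I would check that $P:=\mathcal P\exp\int_{\RC}^{\alpha_0}D_{\widehat\omega}$ actually equals $\mu_{\varPhi_{\AT},0}$ on the nose, not merely modulo $\Int_2$. Since $D_{\widehat\omega}=\der(0,\widehat\omega)$ has vanishing first component, the solution of \eqref{normalized AT-equation} stays in $\NTAut_2=\exp\ntder_2$, so $P$ has the form $A\mapsto A$, $B\mapsto p^{-1}Bp$ for a unique group-like $p\in\exp\widehat{\frak f}_2$; thus $P$ is determined by its $B$-component. Combined with the previous paragraph, $P\equiv\mu_{\varPhi_{\AT},0}\pmod{\Int_2}$, but both $P$ and $\mu_{\varPhi_{\AT},0}$ lie in $\NTAut_2$, and $\NTAut_2\cap\Int_2$ is trivial on the nontrivial part (an inner automorphism fixing $A$ is conjugation by an element centralizing $A$, i.e.\ a power series in $A$, and then the $B$-slot pins it down) — so the congruence upgrades to equality, yielding \eqref{mu0=Pexp}. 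The main obstacle I anticipate is the second step: justifying rigorously that the gauge/projection from $\omega_\AT$ to $D_{\widehat\omega}$ commutes with $\mathcal P\exp$ only \emph{up to} $\Int_2$ requires care with the non-abelian Stokes / comparison-of-connections argument for PA forms on $\overline C_{2,0}$, and one must verify the relevant primitive of $\omega_L$ exists as a PA function and behaves well at the corners $\RC$ and $\alpha_0$; the endpoint normalizations ($\duf_{\alpha_0}=0$-type conditions, and the value of $\phi$ at the iris and at $\RC$) are what make the inner ambiguity collapse.
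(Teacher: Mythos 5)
Your proposal follows essentially the same route as the paper's proof: it reduces to the congruence $\mathcal{P}\exp\int_{\RC}^{\alpha_0}\omega_\AT\equiv\mu_{\varPhi_{\AT},0}\pmod{\Int_2}$ via \eqref{FAT=Pexp}, \eqref{FAT=muAT} and \eqref{muAT=mu0AT}, replaces $\omega_\AT$ by $D_{\widehat\omega}$ using that the two connections agree modulo inner derivations, and then upgrades the congruence to an equality by a normalization argument. The one imprecision is your claim that the normalized subgroup meets $\Int_2$ trivially --- conjugation by $\exp(\lambda A)$ fixes $A$ and lies in both --- but, exactly as the paper argues, this residual ambiguity is killed because both $\mathcal{P}\exp\int_{\RC}^{\alpha_0}D_{\widehat\omega}$ and $\mu_{\varPhi_{\AT},0}$ conjugate $B$ by group-like elements with vanishing degree-one part (since $\widehat\omega$ starts in degree $2$ and $\varPhi_{\AT}\equiv 1 \bmod \deg\geqslant 2$), which forces $\lambda=0$.
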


\begin{proof}
The derivation $D_{\widehat\omega}$ is a (unique) one with $\deg\geqslant 2$
which is congruent to $\omega_\AT$ modulo inner derivations,
whence we have
\begin{equation}\label{PexpAT=PexpD mod Int}
\mathcal{P}\exp\int_{\RC}^{\alpha_0}\omega_{\AT}
\equiv
\mathcal{P}\exp\int_{\RC}^{\alpha_0}D_{\widehat\omega} \mod\Int_2.
\end{equation}
By \eqref{FAT=Pexp}, \eqref{FAT=muAT}, \eqref{muAT=mu0AT}
and \eqref{PexpAT=PexpD mod Int},
we obtain \eqref{mu0=Pexp} modulo $\Int_2$.
Since the right hand side of \eqref{PexpAT=PexpD mod Int} is the automorphism which yields the identity modulo $\deg\geqslant 2$,
and
stabilizes the element $A$ and
the conjugacy class of $B$ as in \eqref{NTAut_2},
and $\mu_{\varPhi_{\AT},0}$ is the unique automorphism in the same residue class
 with the same property  and $\varPhi_{\AT}\equiv 1 \mod \deg\geqslant 2$,
we obtain \eqref{mu0=Pexp}.
Thus Lemma \ref{lem:muAT0=pAT} is proven.
\end{proof}

Let us go back to the proof of Theorem \ref{main theorem}.
As is explained in \cite{DG}, there is a natural Lie algebra isomorphism
\begin{equation}\label{identification:Lie level}
\ntder\Pi_0 \simeq\ntder \Pi_{10}
\end{equation}
whose correspondence is given by 
$\der(0,\phi)\mapsto l_\phi+\der(0,\phi)$ with $\phi\in\widehat{\frak f}_2$ where 
$l_\phi$ is the left multiplication by $\phi$.
(Here $\Pi_0$ and $\Pi_{10}$ are notations employed in \cite{DG}.
Both of them stand for  $\C\langle\langle A,B\rangle\rangle$ but the latter is regarded as a \lq right-torsor' over the former.)
The Lie algebra isomorphism \eqref{identification:Lie level} is integrated to 
the group isomorphism
\begin{equation}\label{identification:group level}
\NTAut\Pi_0 \simeq\NTAut \Pi_{10},
\end{equation}
where the former is the subgroup of $\TAut_2$
consisting of the automorphism $\mu_{\Phi,0}$ 
given as \eqref{NTAut_2} for
$\Phi\in\exp\widehat{\frak f}_2$
while the latter is the group of automorphisms of 
$\C\langle\langle A,B\rangle\rangle$
consisting of $l_\Phi\circ\mu_{\Phi,0}$
and the above identification is given by
$\mu_{\Phi,0} \mapsto l_\Phi\circ\mu_{\Phi,0}$. 

We have $D_{\widehat\omega}\in \ntder\Pi_0\otimes\Omega^1_{\PA}(\overline{C}_{2,0})
 $.
By Lemma \ref{lem:muAT0=pAT},
$\mu_{\varPhi_{\AT},0}$
is the parallel transport 
of the differential equation \eqref{normalized AT-equation},
whence it
lies on $\NTAut\Pi_0$.
And under the identification 
\eqref{identification:group level}
it corresponds to
$l_{\varPhi_{\AT}}\circ\mu_{\varPhi_{\AT},0}$,
which is also
given as the parallel transport of the differential equation
\begin{equation}\label{normalized AT-equation: 01version}
dG=\{l_{\widehat\omega}+D_{\widehat\omega}\}\cdot G
\end{equation}
from $\RC$ to $\alpha_0$.
So
\begin{equation}
l_{\varPhi_{\AT}}\circ\mu_{\varPhi_{\AT},0}
=
\mathcal{P}\exp\int_{\RC}^{\alpha_0}
(l_{\widehat\omega}+D_{\widehat\omega})
\ \in\NTAut \Pi_{10}.
\end{equation}
By applying to $1_{10}:=1\in \Pi_{10}=\C\langle\langle A,B\rangle\rangle$, we obtain \eqref{main formula},
whence Theorem \ref{main theorem} is proven.
\end{proof}

\section{Coefficients in depth 1 and 2}\label{sec:examples}
Along the method of Theorem \ref{main theorem},
we will execute calculations of  the coefficients of
the AT-associator in lower depth terms in the equations
\eqref{eq:depth=1} and \eqref{eq:depth=2}.

By definition, we have
\begin{align*}
\mathcal{P}\exp\int_{\RC}^{\alpha_0}&(l_{\widehat\omega}+D_{\widehat\omega})
=\id + \int_\RC^{\alpha_0}(l_{\widehat\omega}+D_{\widehat\omega}) 
+\int_\RC^{\alpha_0}(l_{\widehat\omega}+D_{\widehat\omega})
\cdot(l_{\widehat\omega}+D_{\widehat\omega}) \\
&+\int_\RC^{\alpha_0}(l_{\widehat\omega}+D_{\widehat\omega})
\cdot(l_{\widehat\omega}+D_{\widehat\omega})
\cdot(l_{\widehat\omega}+D_{\widehat\omega})
+\cdots \in\NTAut \Pi_{10}.
\end{align*}
Then from \eqref{main formula} it follows
\begin{align}\label{direct formula}
&\varPhi_{\AT}=1+\int_\RC^{\alpha_0}{\widehat\omega}+
\left\{\int_\RC^{\alpha_0}{\widehat\omega}\cdot{\widehat\omega}+
\int_\RC^{\alpha_0}D_{\widehat\omega}({\widehat\omega})\right\} \\
&+\left\{
\int_\RC^{\alpha_0}{\widehat\omega}\cdot{\widehat\omega}\cdot{\widehat\omega}
+2\int_\RC^{\alpha_0}{\widehat\omega}\cdot D_{\widehat\omega}({\widehat\omega})
+\int_\RC^{\alpha_0} D_{\widehat\omega}({\widehat\omega})\cdot{\widehat\omega}
+\int_\RC^{\alpha_0} D_{\widehat\omega}\circ D_{\widehat\omega}({\widehat\omega})
\right\} \notag \\
&+\cdots\cdots
\in\exp\widehat{\frak f}_2\subset\C\langle\langle A,B\rangle\rangle. \notag
\end{align}

\medskip

{\it Depth $1$ case} :
Recall that $(\varPhi_{\KZ}|A^{n-1}B)$,
the coefficient of $A^{n-1}B$ ($n\geqslant 2$) 
in the KZ-associator $\varPhi_{\KZ}$ (\cite{Dr})
is calculated to be
$$(\varPhi_{\KZ}|A^{n-1}B)= -\zeta(n)$$
with the Riemann zeta value $\zeta(n)=\sum_{0<k}\frac{1}{k^n}$ in \cite{Dr}.
By \eqref{direct formula}, 
its analogue in the AT-associator $\varPhi_{\AT}$ is calculated as follows:

\begin{eg}\label{eg:depth=1}
For $n\geqslant 2$, we have
\begin{equation}\label{eq:depth=1}
(\varPhi_{\AT}|A^{n-1}B)=\left(\int_\RC^{\alpha_0}{\widehat\omega}\Bigm|A^{n-1}B\right)=\int_\RC^{\alpha_0}{\widehat\omega}_{\Gamma_{n-1}}.
\end{equation}
Here $\Gamma_{n-1}$ is the Lie graph of type $(n-1,2)$ with $\Gamma_{n-1}(A,B)=(\ad A)^{n-1}(B)$,
whose geometric graph is depicted in Figure \ref{Gamma single index}.

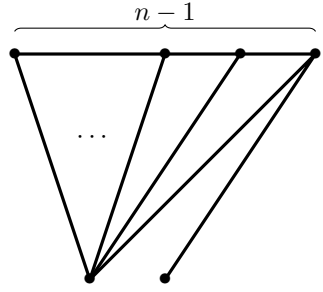
\begin{figure}[h]
\begin{center}
         \begin{tikzpicture}
                  \draw[-,very thick] (-1,3) node{$\bullet$} --(1,3) node{$\bullet$}--(2,3) node{$\bullet$}--(3,3) node{$\bullet$};
                  \draw[-,very thick] (0,0) node{$\bullet$}--(-1,3);
                  \draw[-,very thick] (0,0)--(1,3);
                  \draw[-,very thick] (0,0)--(2,3);
                  \draw[-,very thick] (0,0)--(3,3)--(1, 0) node{$\bullet$}; 
                  \draw[decorate,decoration={brace}] (-1,3.3) -- (3,3.3) node[midway,above]{$n-1$};
                  \draw (-0.3,1.9) node[right]{$\cdots$};
         \end{tikzpicture}
\caption{$\Gamma_{n-1}$}
\label{Gamma single index}
\end{center}
\end{figure}
Actually by Theorem \ref{thm:ATSW}, it can be deduced that
$$(\varPhi_{\AT}|A^{n-1}B)=\frac{B_n}{2(n!)}$$
for $n\geqslant 2$
with the Bernoulli number $B_n$ (see also \cite{Alm}).
\end{eg}

{\it Depth $2$ case} :
We also remind that the coefficient of $A^{b-1}BA^{a-1}B$
($a\geqslant 1, b>1$) 
in the KZ-associator $\varPhi_{\KZ}$
is calculated to be
$$(\varPhi_{\KZ}|A^{b-1}BA^{a-1}B)=\zeta(a,b)$$
with the double zeta value $\zeta(a,b)=\sum_{0<k<l}\frac{1}{k^al^b}$
in \cite{LM} (cf. \cite{F}).
Again by \eqref{direct formula}, 
its analogue in $\varPhi_{\AT}$ is calculated as follows:

\begin{eg}\label{eg:depth=2}
For $a,b\geqslant 1$, we have
\begin{align*}
(\varPhi_{\AT}|A^{b-1}BA^{a-1}B)
=\left(\int_\RC^{\alpha_0}{\widehat\omega}
+\int_\RC^{\alpha_0}{\widehat\omega}\cdot{\widehat\omega}
+\int_\RC^{\alpha_0}D_{\widehat\omega}({\widehat\omega})\Bigm|A^{b-1}BA^{a-1}B\right).
\end{align*}

Let $\Gamma_{i,j,k}$ ($i\geqslant 1$, $j,k\geqslant 0$) be the Lie graph 
with
$\Gamma_{i,j,k}(A,B)\\=(\ad A)^{i-1}[(\ad A)^j(B), (\ad A)^k(B)]
=\sum\limits_{i_1+i_2=i-1\atop 0\leqslant i_1,i_2}\binom{i-1}{i_1}
[(\ad A)^{j+i_1}(B),(\ad A)^{k+i_2}(B)]$,
whose geometric graph is 
depicted in Figure \ref{Gamma triple index}.

\begin{figure}[h]
\begin{center}
         \begin{tikzpicture}
                  \draw[-, very thick] (0,0) node{$\bullet$} --(3.8,1.4) node{$\bullet$} --(3.25,2.5) node{$\bullet$} --(3.1,2.8) node{$\bullet$}-- (0,0);
                  \draw[decorate,decoration={brace,mirror}] (4,1.4) -- (3.2,3) node[midway,right]{$k$};
                  \draw (3.2,1.8) node[rotate=120]{$\cdots$};              
                   \draw[-,very thick] (1,0) --(3.8,1.4);
                  \draw[-,very thick] (0,0) node{$\bullet$} --(3.25,2.5);
                  \draw[-,very thick] (1,0) node{$\bullet$} --(1.5,2)node{$\bullet$}--(1.2,2)node{$\bullet$}--(0.6,2)node{$\bullet$}--(0.3,2)node{$\bullet$}--(0,0);
                   \draw[-,very thick] (0,0) --(1.5,2);
                  \draw[-,very thick] (0,0) --(1.2,2);
                  \draw[-,very thick] (0,0) --(0.6,2);
                  \draw (0.5,1.75) node[right]{\tiny{$\cdots$}};               
                  \draw[decorate,decoration={brace}] (0.3,2.2) -- (1.5,2.2) node[midway,above]{$j$};
                  \draw[-,very thick] (3.1,2.8) --(-1,3.5)node{$\bullet$}--(0.3,2) ;
                  \draw[-,very thick] (-1,3.5)--(-1.2,3)node{$\bullet$} --(-2,1)node{$\bullet$}--(-2.2,0.5)node{$\bullet$} --(0,0) ;
                  \draw[-,very thick] (-1.2,3)--(0,0);    
                  \draw[-,very thick] (-2,1)--(0,0);
                  \draw[decorate,decoration={brace,mirror}] (-1.1,3.7) -- (-2.4,0.5) node[midway,left]{$i$};
                  \draw (-1.4,1.6) node[rotate=70]{$\cdots$};              
          \end{tikzpicture}
\caption{$\Gamma_{i,j,k}$}
\label{Gamma triple index}
\end{center}
\end{figure}
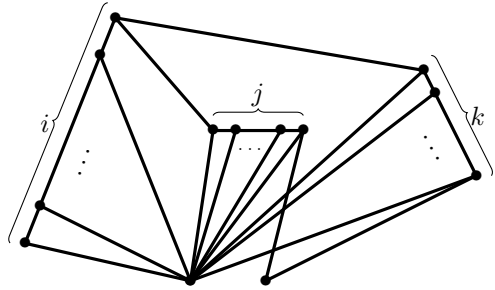

Each term  is calculated as follows:
\begin{align*}
&\left(\int_\RC^{\alpha_0}{\widehat\omega} 
\Bigm|A^{b-1}BA^{a-1}B\right) \\
&=\sum_{i+j+k=a+b-1\atop j<k,\ 1\leqslant i, \ 0\leqslant j,k}\sum_{i_1+i_2=i-1\atop 0\leqslant i_1,i_2}
\binom{i-1}{i_1}
 \left\{
    (-1)^{j+i_1-b+1}\binom{j+i_1}{b-1}+(-1)^{k+i_2-b}\binom{k+i_2}{b-1}
  \right\} \\
&\qquad\qquad \cdot \int_\RC^{\alpha_0}{\widehat\omega}_{\Gamma_{i,j,k}}. \\
&\left(\int_\RC^{\alpha_0}{\widehat\omega} \cdot {\widehat\omega}
\Bigm|A^{b-1}BA^{a-1}B\right) 
=\sum_{i+j=a+b\atop 2\leqslant i,j}(-1)^{i-b}\binom{i-1}{b-1}
\int_\RC^{\alpha_0}
{\widehat\omega}_{\Gamma_{i-1}}\cdot {\widehat\omega}_{\Gamma_{j-1}}. \\
&\left(\int_\RC^{\alpha_0}D_{\widehat\omega}({\widehat\omega})\Bigm|A^{b-1}BA^{a-1}B\right) \\
&=\sum_{i+j=a+b \atop 2\leqslant i,j}\sum_{j_1+j_2=j-1 \atop 0\leqslant j_1,j_2}
\binom{j-1}{j_1}
 \left\{
    (-1)^{j_1-b+1}\binom{j_1}{b-1}-(-1)^{j_2+i-b}\binom{j_2+i-1}{b-1}
  \right\} \\
&\qquad\qquad \cdot \int_\RC^{\alpha_0}
{\widehat\omega}_{\Gamma_{i-1}}\cdot {\widehat\omega}_{\Gamma_{j-1}}.
\end{align*}
Therefore the coefficient is given by a rational linear combination of 
single integrals and double iterated integrals of Kontsevich weight forms of Lie graphs as follows:
\begin{equation}\label{eq:depth=2}
(\varPhi_{\AT}|A^{b-1}BA^{a-1}B)
=\sum_{i+j+k=a+b-1\atop j<k,\ 1\leqslant i, \ 0\leqslant j,k}c_{i,j,k}
\int_\RC^{\alpha_0}{\widehat\omega}_{\Gamma_{i,j,k}}+
\sum_{i+j=a+b\atop 2\leqslant i,j}c_{i,j}\int_\RC^{\alpha_0}
{\widehat\omega}_{\Gamma_{i-1}}\cdot {\widehat\omega}_{\Gamma_{j-1}}
\end{equation}
with
\begin{align*}
&c_{i,j,k}
=
\sum_{i_1+i_2=i-1\atop 0\leqslant i_1,i_2}
\binom{i-1}{i_1}
  \left\{
    (-1)^{j+i_1-b+1}\binom{j+i_1}{b-1}+(-1)^{k+i_2-b}\binom{k+i_2}{b-1}
  \right\}, \\
&c_{i,j}=
 (-1)^{i-b}
  \left\{\binom{i-1}{b-1}+\delta_{j,b}\binom{j-1}{b-1}\right\}
-\sum_{j_1+j_2=j-1 \atop 0\leqslant j_1,j_2}(-1)^{j_2}
\binom{j-1}{j_1}
    \binom{j_2+i-1}{b-1}.
\end{align*}
Here $\delta_{i,j}$ is the Kronecker delta symbol.
We note that
$
(\varPhi_{\AT}|A^{b-1}BA^{a-1}B)
=0
$
when $a+b$ is odd,
because $\varPhi_{\AT}$ is even.
\end{eg}

In principle, our formula \eqref{main formula} enables us to calculate explicitly all the coefficients of the AT-associator $\varPhi_{\AT}$
as rational linear combinations of iterated integrals of
Kontsevich weight forms of Lie graphs as \eqref{eq:depth=1} and \eqref{eq:depth=2}.

There are explicit formulae to describe
all the coefficients of the KZ-associator $\varPhi_{\KZ}$
in terms of multiple zeta values (cf. \cite{F,LM}).
Whereas 
any explicit formulae
to present all the coefficients of $\varPhi_{\AT}$
(or, more generally, iterated integrals of Kontsevich weight forms of Lie graphs)
as linear combinations of multiple zeta values
looks unknown
\footnote{
The author learned that Felder \cite{Fe} calculted  
$(\varPhi_{\AT}|A^{2}BA^{4}B)=\frac{2048\zeta(3,5)-6293\zeta(3)\zeta(5)}{524288\pi^8}.$
}
though it was claimed 
in \cite{RW} Remark 1.5 that all coefficients of $\varPhi_{\AT}$ are linear combinations
of multiple zeta values with rational coefficients.
If their claim is turned into an effective algorithm computing the coefficients of $\varPhi_{\AT}$,
it would give formulae for coefficients of $\varPhi_{\AT}$
in terms of multiple zeta values rather than iterated integrals of  Kontsevich weight forms.

\bigskip
\thanks{
{\it Acknowledgements}.
This work was supported by JSPS KAKENHI 15KK0159
and Daiko Foundation.
The author thanks for Anton Alekseev for explaining him the  proof of
Lemma \ref{FAT in SolKV}
and also thanks for Johan Alm and Matteo Felder who pointed him their works.
He is also grateful to the referee
whose comments improved this paper.
}


\end{document}